\allowdisplaybreaks \setcounter{page}{1}
\begin{document}
\hoffset -1.cm \voffset -1.1cm

\renewcommand\thisnumber{X}
\renewcommand\thisyear {XXXX}
\renewcommand\thismonth{X}
\renewcommand\stitle{Comparison Principles for 3-D Steady Potential Flow}
\renewcommand\thisvolume{XX}

\title{Comparison Principles for 3-D Steady Potential Flow in Spherical Coordinates}

\author[B. Long]
 {LONG Bingsong\corrauth}
\address{School of Mathematics and Statistics, Huanggang Normal University, Hubei 438000, China.
\\
\\{\rm Received 17 October 2024; Accepted 16 February 2025}}
 \emails{{\tt longbingsong@hgnu.edu.cn} (B. Long)}

\begin{abstract}
In this paper, we consider the 3-D steady potential flow for a compressible gas with pressure satisfying $p'(\rho)=\rho^{\gamma-1}$, where $\rho$ is the density and $\gamma\geq-1$ is a constant. In spherical coordinates, the potential equation is of mixed type in the unit sphere. We establish a strong comparison principle for elliptic solutions of the equation. The main difference from the classical case is that the coefficients of this equation depend fully on the potential function itself. We overcome this difficulty by the sufficient analysis on the structure of the equation itself, and finally derive the result. The result obtained here can be applied to the problem of supersonic flow over a delta wing and other problems related to gas dynamics.
\end{abstract}

\ams{35B51, 35J62, 35L65, 76G25}
 \clc{O175}
 \keywords{3-D steady potential flow; comparison principle;  mixed type equation; delta wing; compressible gas.}

\maketitle

\section{Introduction}\label{sec: intro.}
We are concerned with the steady potential flow in the three-dimensional space. The Euler equations for potential flow take the form
\begin{align}
    &\mathrm{div}_{x}(\rho \nabla_{x}\Phi)=0,\label{eq: mass}\\
    &\frac{|\nabla_{x}\Phi|^{2}}{2}+h(\rho)=\frac{B}{2},\label{eq: Bernoulli law}
\end{align}
where $x=(x_1,x_2,x_3)$; $\rho$, $\Phi$ and $\frac{B}{2}$ stand for the density, velocity potential and Bernoulli constant, respectively. Here $h(\rho)$ is the specific enthalpy, satisfying
\begin{equation}\label{eq for h}
	h'(\rho)=\frac{p'(\rho)}{\rho}=\frac{c^2(\rho)}{\rho},
\end{equation}
where $p=p(\rho)$ is the pressure and $c=c(\rho)$ is the sound speed, defined by
\begin{equation}\label{eq:state eq.}
    p'(\rho)=\rho^{\gamma-1}, \qquad~\text{for}~\gamma\geq -1.
\end{equation}
From (\ref{eq for h})-(\ref{eq:state eq.}), we have
\begin{equation*}
  h(\rho)=
      \begin{cases}
          \dfrac{\rho^{\gamma-1}-\rho^{\gamma-1}_0}{\gamma-1}=\dfrac{c^2-c^2_0}{\gamma-1},\quad&\gamma\in [-1,1)\cup(1,+\infty),\\
          \ln({\rho}/{\rho_0}),\quad&\gamma=1,
      \end{cases}
\end{equation*}
where $c_0$ is the sound speed at density $\rho_0>0$. Substituting (\ref{eq: Bernoulli law}) into (\ref{eq: mass}), we obtain a quasilinear equation of second order
\begin{align}\label{eq: 3-D eq.}
    &(c^{2}-\Phi^{2}_{x_{1}})\Phi_{x_{1}x_{1}}+(c^{2}-\Phi^{2}_{x_{2}})\Phi_{x_{2}x_{2}}+(c^{2}-\Phi^{2}_{x_{3}})\Phi_{x_{3}x_{3}}\nonumber\\
    &\qquad-2\Phi_{x_{1}}\Phi_{x_{2}}\Phi_{x_{1}x_{2}}-2\Phi_{x_{1}}\Phi_{x_{3}}\Phi_{x_{1}x_{3}}-2\Phi_{x_{2}}\Phi_{x_{3}}\Phi_{x_{2}x_{3}}=0,
\end{align}
where
\begin{equation}\label{eq:c}
     c^2=c^2_0+\frac{\gamma-1}{2}(B-|\nabla_{x}\Phi|^2).
\end{equation}

Euler equations not only has a broad background of applications \cite{Courant76}, but is also mathematically challenging to study. There has been no general mathematical theory for the high dimensional case even until now \cite{CFe11}. To study equation (\ref{eq: 3-D eq.}), scholars typically start with a simplified model of a particular physics problem, such as the problem of supersonic flow past a delta wing.

Since most supersonic aircraft are designed as a triangle \cite{Hayes04}, the problem of supersonic flow over a delta wing is important in aeronautics. A great deal of experimental and computational work has been done over the past few decades to study this problem (see \cite{Baba63,Fowe56,Hui71,MW84} and references therein). Whereas, to our best knowledge, little is known about the rigorous mathematical theory for the problem. A recent work is \cite{CY15}, in which the authors proved the global existence of the conic solution for the case that the sweep angle of the wing is almost close to zero. In practice, the sweep angle of supersonic aircraft is not that small, so further research is still needed. To study this case in the future, a very important step is to establish a strong comparison principle for elliptic solutions of equation (\ref{eq: phi eq.}) below.

Notice that equation (\ref{eq: 3-D eq.}) is invariant under the following scaling
\begin{equation*}
    x\longrightarrow \tau x, \quad 	(\rho,\Phi)\longrightarrow\Big(\rho,\dfrac{\Phi}{\tau}\Big), \qquad~\text{for}~\tau\neq 0.
\end{equation*}
Hence, we can find an important class of solutions to (\ref{eq: 3-D eq.}) in the spherical coordinates $(r,\theta, \varphi):=\left(\sqrt{x^2_1+x^2_2+x^2_3}, \arccos({x_1}/{r}), \arctan({x_3}/{x_2})\right)$ of the form
\begin{align}\label{eq: spherical tran.}
    \rho(x)=\rho(\zeta), \quad \Phi(x)=r\phi(\zeta),
\end{align}
where $\zeta=(\theta, \varphi)$.  This class of solutions arise in many physically relevant problems, such as the problem of supersonic flow past a conical object \cite{CY15,C03,LY19}. By (\ref{eq: 3-D eq.}) and (\ref{eq: spherical tran.}), the equation for $\phi$ has the form
\begin{align}\label{eq: phi zhankai}
    &(c^{2}-\partial^{2}_\theta\phi)\partial_{\theta\theta}\phi+\left(c^{2}-\left(\dfrac{\partial_\varphi\phi}{\sin\theta}\right)^2\right)\dfrac{\partial_{\varphi\varphi} \phi}{\sin^2\theta}-2\partial_\theta \phi\dfrac{ \partial_\varphi\phi}{\sin\theta}\dfrac{\partial_{\theta\varphi}\phi}{\sin\theta}\nonumber\\
    &\qquad+
    \cot\theta\left(c^{2}+\left(\dfrac{\partial_\varphi\phi}{\sin\theta}\right)^2\right)\partial_\theta\phi+\left(2c^2-\partial^{2}_\theta\phi-\left(\dfrac{\partial_\varphi\phi}{\sin\theta}\right)^2\right)\phi=0,
\end{align}
or equivalently,
\begin{align}\label{eq: phi eq.}
    \mathrm{div}_{\zeta}(\rho D_{\zeta}\phi)+2\rho\phi=0,
\end{align}
with
\begin{align}\label{eq:rho}    
\rho=\rho(|D_{\zeta}\phi|,\phi)=\left(c_{0}^{2}+\frac{\gamma-1}{2}\left(B-\phi^2-|D_{\zeta}\phi|^2\right)\right)^\frac{1}{\gamma-1},
\end{align}
where $D_{\zeta}=\left(\partial_\theta, \frac{\partial_\varphi}{\sin\theta}\right)$ denotes the gradient operator in the unit sphere; $\mathrm{div}_{\zeta}$~denotes the divergence operator in the unit sphere, satisfying
\begin{align}\label{eq:divzeta}
  \mathrm{div}_{\zeta}v=\dfrac{1}{\sin\theta}\partial_\theta(\sin\theta v_\theta)+\dfrac{\partial_\varphi v_\varphi}{\sin\theta},
\end{align}
with~$v=(v_\theta, v_\varphi)$.

Define a pseudo-Mach number
\begin{align}\label{eq: 2D Mach num.}
    L^{2}=L^{2}(|D_{\zeta}\phi|,\phi):=\frac{|D_{\zeta}\phi|^{2}}{c^2}.
\end{align}
 The type of equation (\ref{eq: phi eq.}) is determined by $L$, with $L>1$ for hyperbolic regions, $0\leq L<1$ for elliptic regions, and $L=1$ for parabolic regions. We point here that the following ellipticity principle was established in \cite{LY21}: for $\gamma\geq -1$, equation (\ref{eq: phi eq.}) is elliptic in the interior of a parabolic-elliptic region; in particular, when $\gamma>-1$, there exists a domain-dependent function $\delta_0>0$ such that $L\leq 1-\delta_0$. Moreover, there are no open parabolic regions.

It is known that a systematic mathematical theory of comparison principles of linear elliptic equations has been established (see \cite[Chapters 4, 8]{GT03}). Gilbarg-Trudinger also established several classical comparison principles for elliptic solutions of quasilinear equations (see \cite[Chapter 10]{GT03} for details). These results have been widely used in the study of elliptic equations. However, there are also nonlinear equations to which the conclusions in \cite{GT03} cannot be applied directly, such as two-dimensional unsteady potential flow equations and three-dimensional steady potential flow equations. For the two-dimensional unsteady case, Chen-Feldman \cite{CFe12} established a strong comparison principle for elliptic solutions of the self-similar potential flow equation. This comparison principle plays an important role in the analysis of the shock reflection problem \cite{CFe18}. However, the results of the three-dimensional steady case are not yet available. Then, in this paper, we mainly focus on comparison principles for (\ref{eq: phi eq.}), motivated by the application to the problem of supersonic flow over a delta wing, and the work of comparison principles for self-similar potential flow \cite{CFe12}. 

We remark here that the state of equation of the Chaplygin gas satisfies the relation (\ref{eq:state eq.}) with $\gamma=-1$. This gas has been used extensively in cosmology \cite{KMP01,Popov10}. Particularly, based on the properties of the Chaplygin gas, some multidimensional Riemann problems have been well studied (see, for instance, \cite{CQu12,Wang20,SL16,Serre09}).

We denote by $\mathcal{N}$ the differential operator on the left-hand side of (\ref{eq: phi eq.}):
\begin{align}\label{for N}
    \mathcal{N}\phi:= \mathrm{div}_{\zeta}(\rho D_{\zeta}\phi)+2\rho\phi.
\end{align}
The following theorem is the main result of this paper.
\begin{theorem}[Main Theorem]\label{thm: Main} \it
	Let $\Omega$ be an open bounded set in the unit sphere. Let $p$ satisfy (\ref{eq:state eq.}) with $\gamma\geq-1$ on $\{\rho>0\}$. Also, let ${\phi}_{\pm}\in C^{0,1}(\overline{\Omega})\cap C^2(\Omega)$ satisfy
    \begin{align*}
	\mathcal{N} \phi_-\geq0, \quad\mathcal{N} \phi_+\leq0, \quad \text{in}~\Omega.
	\end{align*}
Assume that $\phi_{\pm}\geq c$ and
	\begin{align}
	\rho(|D_{\zeta}\phi_-|,\phi_-)>0, \quad L^{2}(|D_{\zeta}\phi_-|,\phi_-)<1,\label{elliptic condition1}\\   \rho(|D_{\zeta}\phi_+|,\phi_+)>0,\quad L^{2}(|D_{\zeta}\phi_+|,\phi_+)<1,\label{elliptic condition2}
	\end{align}
in $\Omega$. Then, if $\phi_-\leq\phi_+$ on the boundary $\partial\Omega$, it follows that $\phi_-\leq\phi_+$ in $\Omega$; moreover, either $\phi_-<\phi_+$ in $\Omega$ or $\phi_-\equiv\phi_+$ in $\Omega$.
\end{theorem}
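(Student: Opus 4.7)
The plan is to adapt the template of \cite{CFe12} for the two-dimensional self-similar potential flow equation: reduce the strong comparison to (a) a weak comparison $w := \phi_+ - \phi_- \geq 0$ in $\Omega$, and (b) the classical strong maximum principle together with the Hopf boundary-point lemma applied to a linear elliptic equation satisfied by $w$. For the linearization, I would integrate along the segment $\phi_t := t\phi_+ + (1-t)\phi_-$, $t \in [0,1]$, and apply the fundamental theorem of calculus to the divergence-form identity
\begin{equation*}
\mathcal{N}\phi_+ - \mathcal{N}\phi_- = \mathrm{div}_{\bm{\zeta}}\bigl(\rho_+\, D_{\bm{\zeta}}\phi_+ - \rho_-\, D_{\bm{\zeta}}\phi_-\bigr) + 2(\rho_+\phi_+ - \rho_-\phi_-) \leq 0.
\end{equation*}

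Using the Bernoulli-law identities $\partial_{|p|}\rho = -|p|\rho/c^2$ and $\partial_\phi \rho = -\phi\rho/c^2$ (direct consequences of \eqref{eq:rho}), both differences collapse into a single linear operator in divergence form,
\begin{equation*}
\mathcal{L}w := \mathrm{div}_{\bm{\zeta}}\bigl(A(\bm{\zeta})\, D_{\bm{\zeta}}w + \mathbf{b}(\bm{\zeta})\, w\bigr) + 2\mathbf{b}(\bm{\zeta}) \cdot D_{\bm{\zeta}}w + 2d(\bm{\zeta})\, w \leq 0,
\end{equation*}
with
\begin{equation*}
A(\bm{\zeta}) = \int_0^1 \tfrac{\rho_t}{c_t^2}\bigl(c_t^2 I - D_{\bm{\zeta}}\phi_t \otimes D_{\bm{\zeta}}\phi_t\bigr)\, dt, \quad \mathbf{b}(\bm{\zeta}) = -\int_0^1 \tfrac{\phi_t \rho_t}{c_t^2}\, D_{\bm{\zeta}}\phi_t\, dt,
\end{equation*}
and $d(\bm{\zeta}) = \int_0^1 \rho_t\bigl(1 - \phi_t^2/c_t^2\bigr)\, dt$. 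The appearance of the \emph{same} vector $\mathbf{b}$ both inside the divergence and as the first-order coefficient outside it is a structural coincidence that brings $\mathcal{L}$ into the standard form treated in \cite[Chapter~8]{GT03}. The hypothesis $\phi_\pm \geq c$, read pointwise as $\phi_\pm^2 \geq c^2(\phi_\pm, D_{\bm{\zeta}}\phi_\pm)$, then yields $d \leq 0$ after a short pointwise estimate on $\phi_t^2/c_t^2$, providing the zero-order sign condition required by the weak maximum principle.

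The principal obstacle is uniform ellipticity of $A(\bm{\zeta})$. Each integrand matrix $\frac{\rho_t}{c_t^2}(c_t^2 I - D_{\bm{\zeta}}\phi_t \otimes D_{\bm{\zeta}}\phi_t)$ is positive definite exactly when $|D_{\bm{\zeta}}\phi_t|^2 < c_t^2$, i.e.\ when $\phi_t$ lies in the ellipticity region $\{\tfrac{\gamma+1}{2}|p|^2 + \tfrac{\gamma-1}{2}\phi^2 < c_0^2 + \tfrac{\gamma-1}{2}B\}$. This region is convex for $\gamma \geq 1$, so the endpoint ellipticity transfers to all $\phi_t$ by linear interpolation; but for $-1 \leq \gamma < 1$ (including the Chaplygin case $\gamma = -1$) the region is genuinely non-convex and some intermediate $\phi_t$ may violate ellipticity. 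To overcome this, I would carry out a pointwise structural analysis of the quadratic form $\xi^\top A(\bm{\zeta}) \xi$: decompose $\xi$ along and transverse to $D_{\bm{\zeta}}\phi_t$, isolate the subset of $t$ on which $\phi_t$ remains in the ellipticity region (which contains definite neighbourhoods of $t = 0, 1$ thanks to the strict inequalities $L^2 < 1$ in \eqref{elliptic condition1}--\eqref{elliptic condition2} and the quantitative ellipticity principle of \cite{LY21}), and exploit the strict positivity of $\rho_t$ together with the bound $\phi_\pm \geq c$ to show that this subset contributes a dominant positive part to the $t$-integral. This step replaces the convexity argument available in the Chen--Feldman setting and constitutes the main technical novelty. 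Once uniform ellipticity of $A$ and $d \leq 0$ are in hand, the weak maximum principle in divergence form \cite[Chapter~8]{GT03}, adapted to the sphere via the volume element $\sin\theta\, d\theta\, d\varphi$, yields $w \geq 0$ in $\Omega$, and the strong maximum principle and Hopf boundary-point lemma applied to $\mathcal{L}w \leq 0$ then deliver the final dichotomy $w > 0$ in $\Omega$ or $w \equiv 0$.
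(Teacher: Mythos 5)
Your proposal follows the paper's global architecture closely: linearize $\mathcal{N}\phi_- - \mathcal{N}\phi_+ \geq 0$ along $\phi_t$, obtain a linear divergence-form inequality for the difference, establish a weak comparison principle, and upgrade to the strong version via a Hopf-type boundary lemma. You also correctly spot the structural identity $\partial_z(\rho q_i) = \tfrac12\,\partial_{q_i}(2\rho z)$ that makes the vector $\mathbf b$ appear both inside the divergence and (up to a factor) in the first-order term. However, the way you propose to exploit that identity is where the argument breaks down. You plan to invoke the weak maximum principle of \cite[Chapter 8]{GT03} (Theorem 8.1) once you know $d \le 0$. That theorem requires the integral condition $\int_\Omega(d\,v - b^i D_i v)\,d\bm{\zeta}\le 0$ for every nonnegative $v\in C^1_c(\Omega)$, which is \emph{not} implied by $d\le 0$ when $\mathbf b\neq 0$; the $\mathbf b\cdot D v$ contribution has no sign, and integrating by parts gives the condition $2d + \operatorname{div}\mathbf b\le 0$, which has nothing to do with $d\le 0$ alone. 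The paper does not route through GT's weak maximum principle at all. Instead, its \Cref{comparison principle for Q} uses the \emph{nonlinear} test function $\psi=(h^+)^{1/\beta}$ with $h^+=\max(\phi_--\phi_+,0)$ and a nonnegativity condition on the full $3\times3$ matrix $H$ in \eqref{matrix H}, with the tunable parameter $\beta$ in the last column. Choosing $\beta=\tfrac12$ makes the cross-term coefficients $b_i-\beta c_i$ vanish identically (this is exactly the structural identity you observed), so that \eqref{for F} collapses to the pure ellipticity term plus $-\beta d\,(h^+)^2$, and one is left verifying \eqref{for Hij phit}, i.e.\ $c_t^2\ge |\bm q_t|^2$ and $z_t^2\ge c_t^2$ along the interpolation. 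This test-function device is the essential mechanism your plan is missing: it is what turns the ``same $\mathbf b$'' coincidence into a usable cancellation, rather than an accident you still need a general theorem to absorb.

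On the interpolation of ellipticity: your concern for $-1\le\gamma<1$ is sensible. A direct computation of \eqref{for d q^2-c^2} gives $\mathrm{diag}(\gamma+1,\gamma+1,\gamma-1)$ rather than $(\gamma+1)I$, since $|\bm q|^2=q_1^2+q_2^2$ does not depend on $q_3=z$; so the convexity of $|\bm q|^2-c^2$ closes immediately only for $\gamma\ge 1$. But your proposed remedy — isolating the subset of $t$ for which $\phi_t$ remains elliptic and arguing a ``dominant positive part'' — is not developed and is unlikely to work as stated: the sign-indefinite contribution from the bad $t$-interval has no a priori bound, and there is no smallness parameter in the problem. What the paper actually uses at this point (and what you should try instead) is the \emph{pair} of constraints: ellipticity $c^2>|\bm q|^2$ \emph{together} with the radial-supersonic condition $z^2\ge c^2$ (coming from $\phi_\pm\ge c$) and $z>0$. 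Each region alone is non-convex for $\gamma<1$, but their intersection (restricted to $z>0$) is the epigraph of a convex function of $\bm q$, hence convex, and therefore \emph{is} preserved under the linear interpolation $(\bm q_t,z_t)$. That is the argument that must replace both the paper's one-line convexity claim and your partition-of-$t$ idea.
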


To obtain Theorem \ref{thm: Main}, we also establish a weak comparison principle (Theorem \ref{weak comparison principle} below) and a Hopf-type lemma for equation (\ref{eq: phi eq.}) (Lemma \ref{Hopf lemma for N} below).

 The rest of this paper is arranged as follows. In Section \ref{sec: weak comparison principle}, we first give a comparison principle for nonuniformly elliptic equation in divergence form (Lemma \ref{comparison principle for Q} below), and then apply it to establish a weak comparison principle for equation (\ref{eq: phi eq.}). In Section \ref{sec: Main part}, we establish a Hopf-type lemma for equation (\ref{eq: phi eq.}) and use it to complete the proof of Theorem \ref{thm: Main}.

\section{A weak comparison principle for equation (\ref{eq: phi eq.})}\label{sec: weak comparison principle}
Motivated by the work of \cite{CFe12}, we first construct a comparison principle for nonuniformly elliptic equation in the divergence form.

Let $\Omega$ be an open set in the unit sphere. We consider the nonlinear equation
\begin{align}\label{eq: for Q}
    \mathcal{Q}\phi=\mathrm{div}_{\zeta}(A(D_{\zeta}\phi,\phi,\zeta))+B(D_{\zeta}\phi,\phi,\zeta)=0,
\end{align}
where the functions $A(q,z,\zeta)=\big(A_1(q,z,\zeta), A_2(q,z,\zeta)\big)$ and $B(q,z,\zeta)$ are defined on a subset of $\mathbb{R}^2\times\mathbb{R}\times\Omega$.

Let $\phi\in C^1(\Omega)$. For $D\subset\Omega$, denote a set
\begin{equation}\label{eq: for Sigma}
    \Sigma(\phi,D):=\{(q,z,\zeta):q=D_{\zeta}\phi(\zeta),z=\phi(\zeta),\zeta\in D\}\subset \mathbb{R}^2\times\mathbb{R}\times\Omega.
\end{equation}
From the form of the operator $\mathrm{div}_{\zeta}$ defined by (\ref{eq:divzeta}), we can give the following definition by a  trivial calculation.

\begin{definition}\label{def weak solution} \it
    Let $\Omega$ be an open set in the unit sphere. Let ${\phi}\in C^{0,1}(\overline{\Omega})\cap C^2(\Omega)$.
    Then, $\mathcal{Q}\phi\geq 0$ (resp. $\leq 0$) in the weak sense if the functions $A(q,z,\zeta)$ and $B(q,z,\zeta)$ are bounded on $\Sigma(\phi,\Omega)$ and continuous in a neighborhood of $\Sigma(\phi,D)$ for any compact set $D\subset\Omega$, and
    \begin{equation}\label{weak sense} \int_{\Omega}\left(A(D_{\zeta}\phi,\phi,\zeta)\cdot D_{\zeta} \psi-B(D_{\zeta}\phi,\phi,\zeta)\psi\right) {\rm d}\zeta\leq 0 \quad (resp. \geq 0)
    \end{equation}
    for all nonnegative functions $\psi\in C^{1}_{c}(\Omega)$, where ${\rm d}\zeta=\sin\theta {\rm d}\theta {\rm d}\varphi$.
\end{definition}

For $\phi_{\pm}\in C^{0,1}(\overline{\Omega})\cap C^1(\Omega)$, denote
\begin{equation}\label{eq: for Sigma2}
    \Sigma([\phi_{-},\phi_{+}],\Omega):=\{(q,z,\zeta):q=D_{\zeta}\phi_t(\zeta),z=\phi_t(\zeta),\zeta\in \Omega, t\in[0,1]\}\subset \mathbb{R}^2\times\mathbb{R}\times\Omega,
\end{equation}
with $\phi_t(\zeta)=t\phi_-(\zeta)+(1-t)\phi_+(\zeta)$.

\begin{lemma}\label{comparison principle for Q} \it
 Let $\Omega$ be an open bounded set in the unit sphere. Let ${\phi}_{\pm}\in C^{0,1}(\overline{\Omega})\cap C^1(\Omega)$ satisfy
    \begin{align*}
	\mathcal{Q} \phi_-\geq0, \quad\mathcal{Q} \phi_+\leq0,\quad in~\Omega
	\end{align*}
 in the weak sense. Moreover, assume that the functions $A(q,z,\zeta)$ and $B(q,z,\zeta)$ satisfy the following:

 $(i)$ $A(q,z,\zeta)$ and $B(q,z,\zeta)$ are bounded on $\Sigma([\phi_{-},\phi_{+}],\Omega)$, and $D_{q,z}(A,B)$ exist and are continuous in a neighborhood of $\Sigma([\phi_{-},\phi_{+}],D)$ for any compact set $D\subset\Omega$.

 $(ii)$ There exists a constant $\beta\in(0,1]$ such that the $3\times3$ matrix
\begin{equation}\label{matrix H}
H=[H_{ij}]:={
\left[ \begin{array}{ccc}
\partial_{q_1}A_1(q,z,\zeta)& \partial_{q_1}A_2(q,z,\zeta)& -\beta\partial_{q_1}B(q,z,\zeta)\\
\partial_{q_2}A_1(q,z,\zeta)& \partial_{q_2}A_2(q,z,\zeta)& -\beta\partial_{q_2}B(q,z,\zeta)\\
\partial_{z}A_1(q,z,\zeta)& \partial_{z}A_2(q,z,\zeta)& -\beta\partial_{z}B(q,z,\zeta)
\end{array}
\right ]}
\end{equation}
is nonnegative; that is,
\begin{equation}\label{for nonnegative}
\sum^{3}_{i,j=1}H_{ij}(q,z,\zeta)\xi_i\xi_j\geq 0
\end{equation}
for any $(q,z,\zeta)\in\Sigma([\phi_{-},\phi_{+}],\Omega)$ and $\xi=(\xi_1,\xi_2,\xi_3)\in \mathbb{R}^3$.

 $(iii)$ For each $(q,z,\zeta)\in\Sigma([\phi_{-},\phi_{+}],\Omega)$,
 \begin{equation}\label{for postive}
\sum^{3}_{i,j=1}H_{ij}(q,z,\zeta)\xi_i\xi_j> 0
\end{equation}
for all $\xi=(\xi_1,\xi_2,\xi_3)\in \mathbb{R}^3$ with $(\xi_1,\xi_2)\neq 0$.

Then, if $\phi_-\leq \phi_+$ on $\partial \Omega$, it follows that $\phi_-\leq \phi_+$ in $\Omega$.
\end{lemma}

\begin{proof}
The basic idea of the proof of this lemma is referred to \cite[Proposition 2.1]{CFe12}, but the expression of the equation requires some modification. We still write the detailed proof here for the convenience of our readers.

It is obvious that (\ref{weak sense}) also holds for all nonnegative functions $\psi\in H^{1}_{0}(\Omega)$ by approximation. Under the assumptions of this lemma, the following function satisfies
\begin{equation}\label{w+}
h^+:=\max(\phi_- -\phi_+,0)\in C^{0,1}(\overline{\Omega})\cap H^{1}_{0}(\Omega)\cap C^1(\{h^+>0\}\cap\Omega).
\end{equation}
Let $\psi=(h^+)^{\frac{1}{\beta}}$. It follows from $\beta\in(0,1]$ that $\psi\in H^{1}_{0}(\Omega)$. Inserting this function $\psi$ into (\ref{weak sense}) for $\phi_{\pm}$, we have
	\begin{align}
	\int_{\Omega}\left(A(D_{\zeta}\phi_-,\phi_-,\zeta)\cdot D_{\zeta}(h^+)^{\frac{1}{\beta}}-B(D_{\zeta}\phi_-,\phi_-,\zeta)(h^+)^{\frac{1}{\beta}}\right) {\rm d}\zeta\leq 0,\label{for phifu}\\   
    \int_{\Omega}\left(A(D_{\zeta}\phi_+,\phi_+,\zeta)\cdot D_{\zeta}(h^+)^{\frac{1}{\beta}}-B(D_{\zeta}\phi_+,\phi_+,\zeta)(h^+)^{\frac{1}{\beta}}\right) {\rm d}\zeta\geq 0.\label{for phizheng}
	\end{align}
Then, from (\ref{for phifu})-(\ref{for phizheng}), we obtain
\begin{align}\label{weak sense h}
&\int_{\Omega}\Big(\big(A(D_{\zeta}\phi_-,\phi_-,\zeta)-A(D_{\zeta}\phi_+,\phi_+,\zeta)\big)\cdot D_{\zeta}(h^+)^{\frac{1}{\beta}}\nonumber\\
&\qquad-
\big(B(D_{\zeta}\phi_-,\phi_-,\zeta)-B(D_{\zeta}\phi_+,\phi_+,\zeta)\big)(h^+)^{\frac{1}{\beta}} \Big){\rm d}\zeta\leq 0.
\end{align}

Let us define $F(\zeta)$ the integrand on the left-hand side of (\ref{weak sense h}), and
       \begin{align} 
          a_{ij}(\zeta)=\int^{1}_{0}\partial_{q_j}A_i(D_{\zeta}\phi_t,\phi_t,\zeta){\rm d}t,\quad &b_{i}(\zeta)=\int^{1}_{0}\partial_{z}A_i(D_{\zeta}\phi_t,\phi_t,\zeta){\rm d}t,\nonumber\\
  	c_{i}(\zeta)=\int^{1}_{0}\partial_{q_i}B(D_{\zeta}\phi_t,\phi_t,\zeta){\rm d}t,\quad   &d(\zeta)=\int^{1}_{0}\partial_{z}B(D_{\zeta}\phi_t,\phi_t,\zeta){\rm d}t,\label{cof aij bi}
       \end{align}  
where $\phi_t=t\phi_-+(1-t)\phi_+$. It is easy to verify that $F(\zeta)\in L^{\infty}(\overline{\Omega})\cap C(\Omega)$ and $a_{ij},b_i,c_i,d\in C(\Omega)$. Moreover, using the assumption $(i)$, we have
\begin{equation}\label{for F}
F(\zeta)=\frac{1}{\beta}(h^+)^{\frac{1}{\beta}-1}\sum^{2}_{i,j=1}\left(a_{ij}\partial_i h^+\partial_j h^++b_{i} h^+\partial_i h^+-\beta c_{i} h^+\partial_i h^+-\beta d (h^+)^2\right),\quad\text{in} ~\Omega,
\end{equation}
where $(\partial_1,\partial_2)=(\partial_\theta,\frac{\partial_\varphi}{\sin\theta})$.
From the assumption $(ii)$, with the choice $(\xi_1,\xi_2,\xi_3)=(\partial_\theta h^+, \frac{\partial_\varphi h^+}{\sin\theta}, h^+)$, it follows that $F(\zeta)\geq 0$ in $\Omega$. Besides, (\ref{weak sense h}) implies that $F(\zeta)\leq 0$ in $\Omega$. Therefore, we obtain $F(\zeta) \equiv 0$ in $\Omega$. Then, on $\{h^+>0\}\cap\Omega$, there is
\begin{equation}\label{for h=0}
\sum^{2}_{i,j=1}\left(a_{ij}\partial_i h^+\partial_j h^++b_{i} h^+\partial_i h^+-\beta c_{i} h^+\partial_i h^+-\beta d (h^+)^2\right)\equiv 0.
\end{equation}
From (\ref{for h=0}) and the assumption $(iii)$, it follows that
\begin{equation}\label{for Dh=0}
D_{\zeta}h^+\equiv 0, \quad\text{on} ~\{h^+>0\}\cap\Omega.
\end{equation}
On the other hand, the function $h^+\in C^{0,1}(\overline{\Omega})$ and $h^+\geq 0$ in $\Omega$ with $h^+\equiv 0$ on $\partial\Omega$. In conclusion, we have $h^+\equiv 0$ in $\Omega$; that is, $\phi_-\leq\phi_+$ in $\Omega$.
\end{proof}

\begin{remark}
 From the assumption $(iii)$, with the choice $(\xi_1,\xi_2)\neq 0$ and $\xi_3=0$, it follows from condition (\ref{for postive}) that the $2\times2$ matrix 
 \begin{equation}
{
\left[ \begin{array}{ccc}
\partial_{q_1}A_1(q,z,\zeta)& \partial_{q_1}A_2(q,z,\zeta)\\
\partial_{q_2}A_1(q,z,\zeta)& \partial_{q_2}A_2(q,z,\zeta)
\end{array}
\right ]}
\end{equation}
is positive for all $(q,z,\zeta)\in\Sigma(\phi_t,\Omega)$ with any $t\in[0,1]$. This implies that the nonlinear operator $\mathcal{Q} \phi$ is elliptic in $\Omega$ for $\phi_t$ for any $t\in[0,1]$.
 \end{remark}

Next, we establish a weak comparison principle for equation (\ref{eq: phi eq.}), by applying Lemma \ref{comparison principle for Q}. Note that equation (\ref{eq: phi eq.}) has the form (\ref{eq: for Q}) with
\begin{equation}\label{for A and B}
A(D_{\zeta}\phi,\phi,\zeta)=\rho(|D_{\zeta}\phi|,\phi)D_{\zeta}\phi,\quad B(D_{\zeta}\phi,\phi,\zeta)=2\rho(|D_{\zeta}\phi|,\phi)\phi,
\end{equation}
where the expression of $\rho(|D_{\zeta}\phi|,\phi)$ is given by (\ref{eq:rho}).

\begin{theorem}\label{weak comparison principle} \it
Let $\Omega$ be an open bounded set in the unit sphere. Let $p$ satisfy (\ref{eq:state eq.}) with $\gamma\geq-1$ on $\{\rho>0\}$. Also, let ${\phi}_{\pm}\in C^{0,1}(\overline{\Omega})\cap C^2(\Omega)$ satisfy
    \begin{align}\label{for N weak}
	\mathcal{N} \phi_-\geq0, \quad\mathcal{N} \phi_+\leq0,\quad in~\Omega,
    \end{align}
where $\mathcal{N}$ is defined by (\ref{for N}). Assume that $\phi_{\pm}\geq c$ and
	\begin{align*}
	\rho(|D_{\zeta}\phi_-|,\phi_-)>0, \quad L^{2}(|D_{\zeta}\phi_-|,\phi_-)<1,\\
 \rho(|D_{\zeta}\phi_+|,\phi_+)>0,\quad L^{2}(|D_{\zeta}\phi_+|,\phi_+)<1,
	\end{align*}
in $\Omega$. Then, if $\phi_-\leq\phi_+$ on the boundary $\partial\Omega$, it follows that $\phi_-\leq\phi_+$ in $\Omega$.
\end{theorem}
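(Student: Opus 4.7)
The plan is to apply \Cref{comparison principle for Q} to $\mathcal{N}$ via the identification \eqref{for A and B}, namely $\bm{A}(\bm{q},z,\bm{\zeta})=\rho(|\bm{q}|,z)\,\bm{q}$ and $B(\bm{q},z,\bm{\zeta})=2\rho(|\bm{q}|,z)\,z$, with the specific choice $\beta=1/2$. Hypothesis~(i) of \Cref{comparison principle for Q} reduces to continuity and boundedness of $(\bm{A},B)$ and their first derivatives on $\Sigma([\phi_-,\phi_+],\Omega)$, which follows from the smoothness of $\rho$ on $\{\rho>0\}$ together with an argument showing that the positivity and ellipticity of $\rho$ at the endpoints $\phi_\pm$ propagate to every convex combination $\phi_t=t\phi_-+(1-t)\phi_+$.

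The heart of the matter is the verification of (ii) and (iii). Differentiating the Bernoulli relation $\rho^{\gamma-1}=c^2$ from \eqref{eq:rho} yields $\partial_{q_i}\rho=-\rho q_i/c^2$ and $\partial_z\rho=-\rho z/c^2$, so
\begin{equation*}
\partial_{q_i}A_j=\rho\,\delta_{ij}-\frac{\rho q_i q_j}{c^2},\quad \partial_zA_j=-\frac{\rho z q_j}{c^2},\quad \partial_{q_i}B=-\frac{2\rho z q_i}{c^2},\quad \partial_zB=2\rho\Bigl(1-\frac{z^2}{c^2}\Bigr).
\end{equation*}
The crucial algebraic observation is that the choice $\beta=1/2$ makes the $\xi_i\xi_3$ cross-coefficients $-\beta\partial_{q_i}B+\partial_zA_i=(2\beta-1)\rho z q_i/c^2$ vanish, so the full quadratic form collapses to
\begin{equation*}
\sum_{i,j=1}^{3}H_{ij}\xi_i\xi_j=\rho\Bigl(|\xi'|^2-\frac{(\bm{q}\cdot\xi')^2}{c^2}\Bigr)+\rho\Bigl(\frac{z^2}{c^2}-1\Bigr)\xi_3^2,
\end{equation*}
where $\xi'=(\xi_1,\xi_2)$.

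The sign is then immediate from the geometric and physical hypotheses. Cauchy--Schwarz together with the ellipticity $L^2=|\bm{q}|^2/c^2<1$ gives the first summand $\geq\rho|\xi'|^2(1-L^2)\geq 0$, with strict inequality whenever $\xi'\neq\bm{0}$; this already delivers (iii). The supersonic-type hypothesis $\phi\geq c$ means $z^2/c^2\geq 1$, so the second summand is $\geq 0$; this delivers (ii). An application of \Cref{comparison principle for Q} then concludes that $\phi_-\leq\phi_+$ in $\Omega$.

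The main difficulty I anticipate lies not in the pointwise algebra above but in transferring the endpoint assumptions (ellipticity $L^2<1$, positivity of $\rho$, and the bound $\phi\geq c$) to the convex homotopy $\phi_t$ that enters $\Sigma([\phi_-,\phi_+],\Omega)$. For $\gamma\geq 1$, the relevant sublevel set $\{z^2+|\bm{q}|^2<M\}$ governing $\rho>0$ is convex in $(\bm{q},z)$ and the propagation is automatic; for $-1\leq\gamma<1$, including the Chaplygin case $\gamma=-1$, the set is no longer convex, and a more delicate pointwise analysis exploiting the combined use of Bernoulli's relation and the ellipticity bound will be required to close this gap.
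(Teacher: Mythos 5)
Your algebraic core is correct and coincides with the paper's Step~2: the identification $\bm{A}=\rho\bm{q}$, $B=2\rho z$, the derivative computations, the observation that $\beta=\tfrac12$ cancels the $\xi_i\xi_3$ cross-terms, the resulting quadratic form
$\rho\bigl(|\xi'|^2-\tfrac{(\bm{q}\cdot\xi')^2}{c^2}\bigr)+\rho\bigl(\tfrac{z^2}{c^2}-1\bigr)\xi_3^2$, and the appeal to Cauchy--Schwarz, $L^2<1$ and $\phi\geq c$ to verify (ii)--(iii) of \Cref{comparison principle for Q} are all exactly what the paper does.

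The informative part of your proposal is the last paragraph, and your worry is better founded than you may have realized: the paper's own attempt to close the propagation gap contains a slip. In Step~1 the paper claims $(\bm{q},z)\mapsto|\bm{q}|^2-c^2(\bm{q},z)$ is convex by asserting in \eqref{for d q^2-c^2} that its Hessian quadratic form equals $(\gamma+1)|\bm{\xi}|^2$. But $|\bm{q}|^2=q_1^2+q_2^2$ does not depend on $q_3=z$, whereas $\partial_{q_iq_j}c^2=-(\gamma-1)\delta_{ij}$ holds for all $i,j\in\{1,2,3\}$; hence the correct quadratic form is $(\gamma+1)(\xi_1^2+\xi_2^2)+(\gamma-1)\xi_3^2$, whose $\xi_3^2$-coefficient is negative precisely for $-1\leq\gamma<1$. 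So the paper's convexity argument only works for $\gamma\geq 1$ and does not establish that $L^2(|D_{\bm{\zeta}}\phi_t|,\phi_t)<1$ for all $t\in[0,1]$ when $\gamma<1$. Similarly, the paper's assertion in Step~2 that ``$\phi_\pm\geq c$ implies $\phi_t\geq c$ for all $t$'' is not immediate once one recognises that $c=c(|D_{\bm{\zeta}}\phi|,\phi)$ depends on the solution itself; this is the nonnegativity of the $(3,3)$-entry of $H$ and it too must be propagated along the homotopy. Your instinct that a ``more delicate pointwise analysis exploiting the combined use of Bernoulli's relation and the ellipticity bound'' is needed for $\gamma<1$ is exactly right; in particular the cone condition $z_t>|\bm{q}_t|$ (which does propagate convexly from $\phi_\pm\geq c>|D_{\bm{\zeta}}\phi_\pm|$) has to be brought into play, and neither your sketch nor the paper's Step~1 as written supplies the missing argument for $-1\leq\gamma<1$.
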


\begin{proof}
 The proof will be divided into two steps.

 \emph{Step} 1: Let $\phi_t=t\phi_-+(1-t)\phi_+$ for $t\in[0,1]$. Under the condition of this lemma, we assert here that $L^{2}(|q|,z):=L^{2}(|D_{\zeta}\phi_t|,\phi_t)<1$ in $\Omega$ for all $t\in[0,1]$. To this end, we only need to show that $(q,z)\rightarrow |q|^2-c^2(q,z)$ is a convex function.

 We first compute the second derivatives of $c^2(|q|,z)$. From (\ref{eq for h})-(\ref{eq:state eq.}) and (\ref{eq:rho}), it follows that
 \begin{align}
 c^2=c^2(|q|,z)=c_{0}^{2}+\frac{\gamma-1}{2}(B-z^2-|q|^2).
\end{align}
 Then, using the notation $\partial_{q_{3}}:=\partial_{z}$, we know that, for $i,j=1,2,3$
\begin{equation}\label{for d c^2}
\partial_{q_iq_j}c^2(q,z)=-(\gamma-1)\delta_{ij},
\end{equation}
where $\delta_{ij}$ equals $1$ if $i=j$, and equals $0$ otherwise.

With the use of (\ref{for d c^2}), a trivial calculation yields that for any $\xi=(\xi_1,\xi_2,\xi_3)\in \mathbb{R}^3$,
\begin{equation}\label{for d q^2-c^2}
\sum^3_{i,j=1}\partial_{q_iq_j}\left(|q|^2-c^2(q,z)\right)\xi_i\xi_j=(\gamma+1)|\xi|^2.
\end{equation}
Owing to $\gamma\geq-1$, the left-hand side of (\ref{for d q^2-c^2}) is nonnegative.  Hence, our assertion have proved.

\emph{Step} 2:  Under the assertion proved in \emph{Step} 1, we can check that the conditions of Lemma \ref{comparison principle for Q} are all satisfied here.

Since $\rho(|D_{\zeta}\phi_-|,\phi_-)>0$, $\rho(|D_{\zeta}\phi_+|,\phi_+)>0$ and ${\phi}_{\pm}\in C^{0,1}(\overline{\Omega})\cap C^2(\Omega)$, equation (\ref{for N weak}) also holds in the weak sense of Definition \ref{def weak solution}. Moreover, combining $\rho(|D_{\zeta}\phi_-|,\phi_-)>0$, $\rho(|D_{\zeta}\phi_+|,\phi_+)>0$ and (\ref{eq:rho}), it is obvious that $\rho(|D_{\zeta}\phi_t|,\phi_t)>0$. Then, using the regularity of ${\phi}_{\pm}$ again and (\ref{for A and B}), we know that the condition $(i)$ of Lemma \ref{comparison principle for Q} is satisfied.

By a direct computation, we find that matrix (\ref{matrix H}), for equation (\ref{eq: phi eq.}) with $\beta=\dfrac{1}{2}$ is the following $3\times 3$ matrix:
\begin{align}\label{matrix Hqz1}
H=[H_{ij}(q,z,\zeta)]:=&{
\left[ \begin{array}{ccc}
\rho-q^2_1/\rho^{\gamma-2} & -q_1q_2/\rho^{\gamma-2}& q_1z/\rho^{\gamma-2}\\
-q_1q_2/\rho^{\gamma-2} & \rho-q^2_2/\rho^{\gamma-2}&  q_2z/\rho^{\gamma-2}\\
-q_1z/\rho^{\gamma-2} & -q_2z/\rho^{\gamma-2}&  z^2/\rho^{\gamma-2}-\rho
\end{array}
\right ]},
\end{align}
 where $q=(q_1,q_2)=\left(\partial_\theta\phi_t,\frac{\partial_\varphi\phi_t}{\sin\theta}\right)$, $z=\phi_t$. With the use of the relation $c^2=\rho^{\gamma-1}$, (\ref{matrix Hqz1}) can be written as
\begin{align}\label{matrix Hqz2}
H=[H_{ij}(q,z,\zeta)]:=
\frac{1}{\rho^{\gamma-2}}{
\left[ \begin{array}{ccc}
c^2-q^2_1 & -q_1q_2& q_1z\\
-q_1q_2 & c^2-q^2_2&  q_2z\\
-q_1z & -q_2z&  z^2-c^2
\end{array}
\right ]}.
\end{align}
Then, we have
\begin{align}
           \sum^{3}_{i,j=1}H_{ij}(q,z,\zeta)\xi_i\xi_j&=\frac{1}{\rho^{\gamma-2}}\left((c^2-q^2_1)\xi^2_1+(c^2-q^2_2)\xi^2_2-2q_1q_2\xi_1\xi_2+(z^2-c^2)\xi^2_3\right)\nonumber\\
&=\frac{1}{\rho^{\gamma-2}}\left(c^2(\xi^2_1+\xi^2_2)-(q_1\xi_1+q_2\xi_2)^2+(z^2-c^2)\xi^2_3\right), \label{for Hij}
\end{align}
for all $\xi=(\xi_1,\xi_2,\xi_3)\in \mathbb{R}^3$.

By the Schwartz inequality $(q_1\xi_1+q_2\xi_2)^2\leq (q^2_1+q^2_2)(\xi^2_1+\xi^2_1)$, we conclude from (\ref{for Hij}) that for all $\xi=(\xi_1,\xi_2,\xi_3)\in \mathbb{R}^3$,
    \begin{align}\label{for Hij phit}
           \sum^{3}_{i,j=1}H_{ij}(q,z,\zeta)\xi_i\xi_j&=\frac{1}{\rho^{\gamma-2}}\left(c^2(\xi^2_1+\xi^2_2)-(q_1\xi_1+q_2\xi_2)^2+(z^2-c^2)\xi^2_3\right)\nonumber\\
           &\geq \frac{1}{\rho^{\gamma-2}}\left(c^2(\xi^2_1+\xi^2_2)-(q^2_1+q^2_2)(\xi^2_1+\xi^2_1)+(z^2-c^2)\xi^2_3\right)\nonumber\\
         &=\frac{1}{\rho^{\gamma-2}}\left((c^2-|q|^2)(\xi^2_1+\xi^2_1)+(z^2-c^2)\xi^2_3\right),
    \end{align}
where $|q|^2=q^2_1+q^2_2$. Due to $\phi_{\pm}\geq c$, there is $\phi_t=t\phi_-+(1-t)\phi_+\geq c$ for all $t\in[0,1]$. By this inequality, $\rho>0$ and the the assertion proved in \emph{Step} 1, the last expression in (\ref{for Hij phit}) is nonnegative for all $\xi=(\xi_1,\xi_2,\xi_3)\in \mathbb{R}^3$ and strictly positive when $(\xi_1,\xi_2)\neq 0$. Thus, matrix (\ref{matrix Hqz2}) satisfies the conditions $(ii)$--$(iii)$ of Lemma 
\ref{comparison principle for Q}.

In conclusion, we complete the proof of this theorem by Lemma \ref{comparison principle for Q}.
\end{proof}

 \begin{remark}\label{weak condition}
 By (\ref{eq: spherical tran.}), a trivial calculation yields $\phi=u_r$, where $u_r$ is the component of velocity along the radial direction. Then, we can assume without loss of generality that $\phi\geq 0$. Combining this assumption and (\ref{eq: phi eq.}), we have $\mathrm{div}_{\zeta}(\rho D_{\zeta}\phi)\leq 0$. This implies that for elliptic solutions of (\ref{eq: phi eq.}), $\phi$ cannot achieve the local minimum anywhere in $\Omega$ by the maximum principle. Hence, to ensure $\phi\geq c$ in $\Omega$, we only need to assume that $\phi\geq c_*$ on $\partial\Omega$, where $c_*$ is the limit sonic. In the application to the problem of supersonic flow past a delta wing, to utilizing Theorem \ref{weak comparison principle} (or Theorem \ref{thm: Main}), the velocity of oncoming flow along the radial direction should satisfy $u_r\geq c_*$ on the shock surface because the potential function $\phi=u_r$ is continuous across the shock.
 \end{remark}

\section{The proof of Theorem \ref{thm: Main}}\label{sec: Main part}
  In order to prove Theorem \ref{thm: Main}, we first show the following lemma.
 \begin{lemma}\label{lemma1} \it
 	Let $\Omega$ be an open bounded set in the unit sphere. Let $\rho>0$ in $\Omega$. Then equation (\ref{eq: phi zhankai}) is uniformly elliptic in $\Omega$ if and only if there exists a positive number $\varepsilon\in(0,1)$ such that $L^2\leq 1-\varepsilon$ in $\Omega$, where $L$ is given by (\ref{eq: 2D Mach num.}).
 \end{lemma}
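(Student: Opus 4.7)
My plan is to compute the principal symbol of \eqref{eq: phi zhankai} and read off its two eigenvalues in an orthonormal frame on the unit sphere; the equivalence in the lemma will then follow because the ratio of these eigenvalues is exactly $1/(1-L^{2})$.

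First, I pass to the orthonormal frame $(\partial_\theta,(\sin\theta)^{-1}\partial_\varphi)$ on the sphere and write $\mathbf q=(q_1,q_2):=D_{\bm{\zeta}}\phi$. A direct computation (substituting $\eta_1=\xi_\theta$, $\eta_2=\xi_\varphi/\sin\theta$ in the coordinate symbol and completing the square) shows that the principal part of \eqref{eq: phi zhankai} is represented in this frame by the symmetric matrix $c^2 I-\mathbf q\mathbf q^{T}$, with symbol
\[
\sigma(\bm\eta)=c^2|\bm\eta|^2-(\mathbf q\cdot\bm\eta)^2.
\]
This matrix has eigenvalue $c^2$ on $\mathbf q^{\perp}$ and eigenvalue $c^2-|\mathbf q|^2=c^2(1-L^2)$ on the line spanned by $\mathbf q$. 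Since $\rho>0$ in $\Omega$ gives $c^2=\rho^{\gamma-1}>0$, the operator is elliptic in $\Omega$ exactly when $L^2<1$.

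Next, both implications of the stated equivalence follow by inspecting these two eigenvalues pointwise. For the ``if'' direction, assuming $L^2\le 1-\varepsilon$ in $\Omega$ yields $\varepsilon c^2|\bm\eta|^2\le\sigma(\bm\eta)\le c^2|\bm\eta|^2$; using the continuity of $c^2$ together with $\rho>0$, one obtains a finite $c^2_{\max}$ and a positive $c^2_{\min}$ on $\overline\Omega$, which produces the uniform ellipticity constants $\varepsilon c^2_{\min}$ and $c^2_{\max}$. For the ``only if'' direction, pointwise uniform bounds $\lambda|\bm\eta|^2\le\sigma(\bm\eta)\le\Lambda|\bm\eta|^2$ with $0<\lambda\le\Lambda$ applied to $\bm\eta$ parallel to $\mathbf q$ give $c^2(1-L^2)\ge\lambda$, while $\bm\eta\perp\mathbf q$ gives $c^2\le\Lambda$; dividing yields $1-L^2\ge\lambda/\Lambda$, so $\varepsilon:=\lambda/\Lambda\in(0,1]$ does the job.

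I expect the only mildly delicate point to be ensuring that $c^2$ itself has uniform upper and lower bounds on the domain under consideration, so that the pointwise eigenvalue identities $\lambda_{\min}=c^2(1-L^2)$ and $\lambda_{\max}=c^2$ translate into genuine constants in the uniform-ellipticity definition. Given the hypothesis $\rho>0$ in $\Omega$ together with the natural regularity of $\phi$ implicit in the statement, this is a routine continuity-plus-compactness argument and does not present a substantive obstacle.
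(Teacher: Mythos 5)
Your eigenvalue computation is exactly the heart of the paper's proof: the coefficient matrix in the orthonormal frame is $c^2I-\mathbf q\mathbf q^{T}$, with eigenvalues $c^2$ and $c^2(1-L^2)$, so the ratio is $1/(1-L^2)$. The paper obtains the same two numbers via the characteristic polynomial $\lambda^2-(2c^2-|D_{\bm\zeta}\phi|^2)\lambda+c^2(c^2-|D_{\bm\zeta}\phi|^2)=0$, and then works directly with the ratio $\lambda_2/\lambda_1=1/(1-L^2)$.

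Where you diverge from the paper is in the notion of uniform ellipticity, and this is what creates the difficulty you flag at the end. You use the two-sided bound $\lambda|\bm\eta|^2\le\sigma(\bm\eta)\le\Lambda|\bm\eta|^2$ with fixed constants $0<\lambda\le\Lambda$, which forces you to bound $c^2$ from above and below. But $\rho>0$ in the \emph{open} set $\Omega$ does not give $\rho$ bounded away from zero on $\overline\Omega$, so the ``routine continuity-plus-compactness argument'' you invoke can in fact fail (e.g.\ if $\rho\to0$ at $\partial\Omega$). The paper follows Gilbarg--Trudinger's convention for quasilinear equations, under which uniform ellipticity means the eigenvalue ratio $\Lambda/\lambda$ is bounded in $\Omega$; with that definition the factor $c^2$ cancels, so $\Lambda/\lambda=1/(1-L^2)\le C$ if and only if $L^2\le1-1/C$, and no bound on $c^2$ is needed in either direction. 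Your ``only if'' argument already works verbatim under the ratio definition (it produces $\varepsilon=\lambda/\Lambda$ pointwise, hence $1/C$), and your ``if'' argument becomes immediate: $L^2\le1-\varepsilon$ gives $\Lambda/\lambda=1/(1-L^2)\le1/\varepsilon$. So the fix is simply to adopt the ratio definition rather than try to control $c^2$ itself.
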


 \begin{proof}
 	A direct computation yields that the characteristic equation for (\ref{eq: phi zhankai}) has the form
 	\begin{equation*}
 		\lambda^2-a_1\lambda+a_2=0,
 	\end{equation*}
 	where
 	\begin{equation*}
 		\begin{aligned}
 			a_1&=2c^2-|D_{\zeta}\phi|^2,\\
 			a_2&=c^2(c^2-|D_{\zeta}\phi|^2).
 		\end{aligned}
 	\end{equation*}
 	Thus the ratio of the eigenvalues is
 	\begin{equation}\label{eq2}
 		\dfrac{\lambda_2}{\lambda_1}=\dfrac{a_1+\sqrt{a^2_1-4a_2}}{a_1-\sqrt{a^2_1-4a_2}}=\dfrac{c^2}{c^2-|D_{\zeta}\phi|^2}=\dfrac{1}{1-L^2}.
 	\end{equation}
       This equality implies that for any bounded constant $C>0$, if the ratio $\lambda_2/\lambda_1\leq C$, then we have
 	\begin{equation}\label{eq6}
 		L^2\leq 1-\frac{1}{C}.
 	\end{equation}
 	
 	On the other hand, from (\ref{eq: phi zhankai}), we obtain that for any $\tau\in \mathbb{R}^{2}-\{0\}$,
 	\begin{equation}\label{eq7}
 		\sum^{2}_{i,j=1} A_{ij}( \phi,D_{\zeta}\phi)\tau_i\tau_j=c^2|\tau|^2-(D_{\zeta}\phi\cdot\tau)^2,
 	\end{equation}
  where $A_{ij}(\phi,D_{\zeta}\phi)$ has its expression as shown in the second-order coefficients of (\ref{eq: phi zhankai}). By (\ref{eq7}) and the Cauchy-Schwartz inequality, there is
 	\begin{equation}\label{eq8}
 		c^2|\tau|^2 \geq \sum^{2}_{i,j=1} A_{ij}(\phi,D_{\zeta}\phi)\tau_i\tau_j\geq (c^2-|D_{\zeta}\phi|^2)|\tau|^2.
 	\end{equation}
  From (\ref{eq7})-(\ref{eq8}), it follows that
 	\begin{equation*}
 		\frac{\lambda_2}{\lambda_1}\leq\frac{1}{1-L^2}.
 	\end{equation*}
 	This means that if $L^2\leq 1-\varepsilon$, then
 	\begin{equation*}
 	   \frac{\lambda_2}{\lambda_1}\leq \frac{1}{\varepsilon},
 	\end{equation*}
 	which completes the proof.
 \end{proof}
Next, inspired by the work in \cite{CFe12,GT03}, we establish a Hopf-type lemma for equation (\ref{eq: phi eq.}).
\begin{lemma}\label{Hopf lemma for N} \it
Let $\Omega$ be an open bounded set in the unit sphere. Let $p$ satisfy (\ref{eq:state eq.}) with $\gamma\geq-1$ on $\{\rho>0\}$. Also, let ${\phi}_{\pm}\in C^{0,1}(\overline{\Omega})\cap C^2(\Omega)$ satisfy
    \begin{align*}
	\mathcal{N} \phi_-\geq0, \quad\mathcal{N} \phi_+\leq 0,\quad in~\Omega,
	\end{align*}
where $\mathcal{N}$ is defined by (\ref{for N}). Assume that $\phi_{\pm}\geq c$ and (\ref{elliptic condition1})-(\ref{elliptic condition2}) holds. Let $\phi_{\pm}$ and $\zeta_*\in\partial\Omega$ satisfy
	\begin{align}
	&\phi_-<\phi_+~\text{in}~\Omega,\qquad \phi_-(\zeta_*)=\phi_+(\zeta_*);\label{hopf condition one}\\
        &\phi_{\pm}\in C^2\big(\overline{\Omega}\cap C_R(\zeta_*)\big)\quad \text{for~some}~R>0;\label{hopf condition two}\\
        &\text{strict inequalities in (\ref{elliptic condition1})-(\ref{elliptic condition2}) hold at}~ \zeta_*;\label{hopf condition three}\\
        &\partial\Omega~\text{satisfies an interior sphere condition at}~\zeta_*.\label{hopf condition four}
	\end{align}
Then $\partial_{\nu}(\phi_--\phi_+)(\zeta_*)>0$, where $\nu$ is the unit outer normal derivative of $\phi_--\phi_+$ at $\zeta_*$.
\end{lemma}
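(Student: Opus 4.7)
The plan is to linearize the inequality $\mathcal{N}\phi_- - \mathcal{N}\phi_+ \geq 0$ into a linear uniformly elliptic inequality for $w := \phi_- - \phi_+$ near $\bm{\zeta}_*$, and then apply the classical Hopf boundary-point lemma. By \eqref{hopf condition one}, $w \leq 0$ in $\Omega$ and $w(\bm{\zeta}_*) = 0$, so $w$ attains a non-positive maximum at $\bm{\zeta}_*$. Writing $\mathcal{N}\phi = F(D^2_{\bm{\zeta}}\phi, D_{\bm{\zeta}}\phi, \phi, \bm{\zeta})$ in the non-divergence form of \eqref{eq: phi zhankai} and setting $\phi_t := t\phi_- + (1-t)\phi_+$, I would integrate along the segment to obtain
\begin{equation*}
\mathcal{N}\phi_- - \mathcal{N}\phi_+ = \int_0^1 \tfrac{d}{dt} F(D^2\phi_t, D\phi_t, \phi_t, \bm{\zeta})\,dt = a^{ij}(\bm{\zeta})\partial_{ij}w + b^i(\bm{\zeta})\partial_i w + d(\bm{\zeta})w =: \mathcal{L}w,
\end{equation*}
with $a^{ij}(\bm{\zeta}) = \int_0^1 F_{r_{ij}}(D^2\phi_t, D\phi_t, \phi_t, \bm{\zeta})\,dt$, and analogously for $b^i$ and $d$. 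By \eqref{hopf condition two}, $\phi_\pm \in C^2(\bar U)$ on $U := \Omega \cap C_r(\bm{\zeta}_*)$ for any $r \leq R$, so these integrated coefficients lie in $C(\bar U)$.

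The hard part will be verifying uniform ellipticity of $\mathcal{L}$ on $\bar U$. The strict inequalities in \eqref{hopf condition three} and continuity yield $\varepsilon > 0$ and a small $r$ such that $L^2(|D_{\bm{\zeta}}\phi_\pm|, \phi_\pm) \leq 1 - \varepsilon$ on $\bar U$, but this does not a priori control $L^2$ along the segment $\{\phi_t\}_{t\in[0,1]}$. Here I would invoke the convexity of $(\bm{q}, z) \mapsto |\bm{q}|^2 - c^2(\bm{q}, z)$ established in Step~1 of the proof of \Cref{weak comparison principle}; this gives $L^2(|D_{\bm{\zeta}}\phi_t|, \phi_t) \leq 1 - \varepsilon$ uniformly for $t \in [0,1]$ on $\bar U$. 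Applying \Cref{lemma1} pointwise in $t$ and averaging then shows $(a^{ij})$ is uniformly elliptic on $\bar U$. Hence $\mathcal{L}$ is a linear, uniformly elliptic operator on $\bar U$ with bounded continuous coefficients, and $\mathcal{L}w \geq 0$ there.

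With uniform ellipticity in hand, the conclusion follows from the classical Hopf boundary-point lemma applied in local coordinates on the sphere; the interior sphere condition \eqref{hopf condition four} furnishes an open geodesic ball $B \subset \Omega$ tangent to $\partial\Omega$ at $\bm{\zeta}_*$. The zero-order coefficient $d$ need not have a definite sign, but since $w \leq 0$ one has $d^+ w \leq 0$, so rewriting $(a^{ij}\partial_{ij} + b^i\partial_i - d^-)w = \mathcal{L}w - d^+ w \geq 0$ reduces the inequality to one with a non-positive zero-order coefficient, to which the standard Hopf argument applies. I would then place a standard exponential barrier $v$ on an annular shell inside $B$, verify $\mathcal{L}v > 0$ for a sufficiently large parameter in the exponent, and use the maximum principle on $w - \delta v$ for small $\delta > 0$ to conclude $\partial_{\bm{\nu}}w(\bm{\zeta}_*) > 0$. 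This is exactly $\partial_{\bm{\nu}}(\phi_- - \phi_+)(\bm{\zeta}_*) > 0$, as required.
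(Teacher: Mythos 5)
Your proposal is correct and reaches the same conclusion, but it handles the one genuinely delicate point differently from the paper. Both you and the paper linearize $\mathcal{N}\phi_- - \mathcal{N}\phi_+$ along the segment $\phi_t$, use the convexity of $(\bm{q},z)\mapsto|\bm{q}|^2 - c^2(\bm{q},z)$ (Step~1 of \Cref{weak comparison principle}) together with \Cref{lemma1} to obtain uniform ellipticity of the integrated coefficients on a compact ball near $\bm{\zeta}_*$, and then invoke the interior sphere condition. The difference is how each treats the zero-order coefficient, which has no built-in sign. The paper first linearizes in \emph{divergence} form, observes that the integrated coefficients still satisfy the $H$-matrix positivity hypotheses of \Cref{comparison principle for Q}, and concludes that the linearized operator obeys the weak comparison principle; it then appeals to the ``standard proof'' of Hopf's lemma on the grounds that ellipticity plus the comparison principle are the only ingredients needed in the barrier argument. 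You instead linearize directly in non-divergence form and use the elementary reduction: since $w=\phi_- - \phi_+\le 0$ and $\mathcal{L}w\ge 0$, one has $a^{ij}\partial_{ij}w + b^i\partial_i w - d^- w = \mathcal{L}w - d^+w\ge 0$, an inequality with a non-positive zero-order coefficient to which GT03 Lemma~3.4 applies verbatim. Your reduction is more self-contained and explicit — it does not re-enter the divergence-structure machinery of \Cref{comparison principle for Q} — and it makes precise the step that the paper leaves at the level of an assertion. The only small bookkeeping item you should make explicit (as the paper does) is that the radius of the interior tangent ball is chosen small enough that the ball lies inside $\Omega\cap C_R(\bm{\zeta}_*)$, so the $C^2$ regularity from \eqref{hopf condition two} and the strict inequalities from \eqref{hopf condition three} hold uniformly on its closure.
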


 \begin{proof}
 It follows from (\ref{hopf condition four}) that there exists a ball $C_r(\zeta_0)\subset \Omega$ with center $\zeta_0\in \Omega$ and radius $r>0$, such that $\zeta_*\in \partial C_r(\zeta_0)$. Then, using (\ref{hopf condition two})-(\ref{hopf condition three}) and choosing a suitable $r$, we can assume that there exists $\varepsilon>0$ such that
    \begin{align}
    \rho(|D_{\zeta}\phi_-|,\phi_-)\geq \varepsilon, \quad  L^{2}(|D_{\zeta}\phi_-|,\phi_-)\leq 1-\varepsilon\quad&\text{in}~ \overline{C_r(\zeta_0)},\label{uniformly elliptic condition one}\\ \rho(|D_{\zeta}\phi_+|,\phi_+)\geq \varepsilon, \quad  L^{2}(|D_{\zeta}\phi_+|,\phi_+)\leq 1-\varepsilon\quad&\text{in}~\overline{C_r(\zeta_0)},\label{uniformly elliptic condition two}
    \end{align}
 and
 \begin{equation}\label{for Cr}
  C_r(\zeta_0)\subset \Omega\cap C_R(\zeta_*).
 \end{equation}

 Let $h=\phi_--\phi_+$ in $C_r(\zeta_0)$. Then, by (\ref{hopf condition one}), we have $h<0$ in $C_r(\zeta_0)$ and $h=0$ at the point $\zeta_*$. Moreover, from (\ref{hopf condition two}) and (\ref{for Cr}), the function $h\in C^2(\overline{C_r(\zeta_0)})$ satisfies the following equation
 \begin{equation}\label{for w}
 \frac{1}{\sin\theta}\partial_i\left(\sin\theta a_{ij}(\zeta)\partial_jh+\sin\theta b_i(\zeta)h\right)+c_i(\zeta)\partial_ih+d(\zeta)h\geq 0,\quad i,j=1,2,
 \end{equation}
 where $(\partial_1,\partial_2)=\left(\partial_\theta,\frac{\partial_\varphi}{\sin\theta}\right)$; the coefficients $a_{ij},b_i,c_i,d$ are defined by (\ref{cof aij bi}) with the functions $A$ and $B$ given by (\ref{for A and B}). Moreover, proceeding as in the analysis of \emph{Step} 1 in Theorem \ref{weak comparison principle}, it is obvious that the relations in (\ref{uniformly elliptic condition one})-(\ref{uniformly elliptic condition two}) also hold for each $\phi_t=t\phi_-+(1-t)\phi_+$ with $t\in[0,1]$ in $\overline{C_r(\zeta_0)}$. It follows from Lemma \ref{lemma1} and (\ref{cof aij bi}) that equation (\ref{for w}) is uniformly elliptic in $\overline{C_r(\zeta_0)}$ and $a_{ij},b_i,c_i,d\in C^1(\overline{C_r(\zeta_0)})$.

 Also note that equation (\ref{for w}) has the form (\ref{eq: for Q}) with
\begin{align*}
 &A=(A_1,A_2), \quad \text{with} \quad A_i=\sum^2_{i=1} a_{ij}(\zeta)\partial_jh+b_i(\zeta)h,\\
 &B=\sum^2_{i=1} c_i(\zeta)\partial_ih+d(\zeta)h.
\end{align*}
For simplification, we also use $\mathcal{N}$ to denote the differential operator on the left-hand side of this equation. From $a_{ij},b_i,c_i,d\in C^1(\overline{C_r(\zeta_0)})$, it is clear that any ${\phi}_{\pm}\in C^{0,1}(\overline{C_r(\zeta_0)})\cap C^2(C_r(\zeta_0))$ with $\mathcal{N} \phi_-\geq0$ and $\mathcal{N} \phi_+\leq0$ in $C_r(\zeta_0)$ satisfy these equations in the weak sense of Definition \ref{def weak solution}. Moreover, it is direct to check that the conditions of Lemma \ref{comparison principle for Q} is satisfied in $C_r(\zeta_0)$ with $\beta=\frac{1}{2}$. Thus, equation (\ref{for w}) satisfies the weak comparison principle in the any open subregion $D\subset C_r(\zeta_0)$.

On the other hand, owing to $a_{ij},b_i,c_i,d\in C^1(\overline{C_r(\zeta_0)})$, equation (\ref{for w}) can be expanded as
 \begin{equation}\label{for h3.1}
 \hat{a}_{ij}(\zeta)\partial_{ij}h+ \hat{b}_i(\zeta)\partial_ih+\hat{c}(\zeta)h\geq 0,\quad i,j=1,2,
 \end{equation}
in $C_r(\zeta_0)$, where $\hat{b}_i,\hat{c}\in C(\overline{C_r(\zeta_0)})$ and $\hat{a}_{ij}=a_{ij}$. Note that equation (\ref{for h3.1}) is uniformly elliptic in $C_r(\zeta_0)$, and satisfies the weak comparison principle in any open subregion of $C_r(\zeta_0)$. Then, we conclude that Hopf's lemma holds for solutions of (\ref{for h3.1}) in $C_r(\zeta_0)$ by the standard proof of Hopf's lemma (see, for example,  in \cite[Lemma 3.4]{GT03}); that is, $\partial_{\nu}(h)(\zeta_*)>0$ for $h=\phi_--\phi_+$.
 \end{proof}

  Now, by the lemmas above, we are ready to prove Theorem \ref{thm: Main}. From Theorem \ref{weak comparison principle} and Lemma \ref{Hopf lemma for N}, we can follow the standard argument.

  Set $\Omega_1=\{\phi_-<\phi_+\}\cap\Omega$ and $\Omega_2=\{\phi_-=\phi_+\}\cap\Omega$. Obviously, $\Omega=\Omega_1\cup\Omega_2$ and $\Omega_1$ is a open region. If the conclusion in Theorem \ref{thm: Main} was false, then both $\Omega_1$ and $\Omega_2$ are nonempty. This means that there is a point $\zeta_*\in \partial\Omega_1\cap\Omega$, which has an interior touching ball from the $\Omega_1$ side, and
  \begin{equation}\label{for Dh1=0}
 D(\phi_--\phi_+)(\zeta_*)=0,
 \end{equation}
because the point $\zeta_*\in \partial\Omega_2\cap\Omega$. Since $\zeta_*$ is an interior point of $\Omega$, the condition of (\ref{hopf condition two})-(\ref{hopf condition three}) are satisfied by the result in \cite{LY21}. Then, an application of Lemma \ref{Hopf lemma for N} leads to a contradiction with (\ref{for Dh1=0}). Hence, we have proved Theorem \ref{thm: Main}.

\section*{Acknowledgement}
This work was supported in part by National Natural Science Foundation of Hubei Province of China (Grant number 2024AFB007) and Science and Technology Research Project of Education Department of Hubei Province (Grant number D20232901).

\end{document}